\newcommand*{\rom}[1]{\expandafter\@slowromancap\romannumeral #1@}
\theoremstyle{definition}
\newtheorem{fact}{fact}
\newtheorem{thm}[fact]{Theorem}
\newtheorem{lemma}[fact]{Lemma}
\newtheorem{prop}[fact]{Proposition}
\newtheorem{defini}[fact]{Definition}
\title{Towards a Church-Turing-Thesis for Infinitary Computations}
\author{Merlin Carl}
\begin{document}
\maketitle

\begin{abstract}
We consider the question whether there is an infinitary analogue of the Church-Turing-thesis. To this end, we argue that there is an intuitive notion of transfinite computability and build a canonical model, called Idealized Agent Machines ($IAM$s) 
of this which will turn out to be equivalent in strength to the Ordinal Turing Machines defined by P. Koepke.
\end{abstract}

\section{Introduction}

Since \cite{ITTM}, various generalizations of classical notions of computability to the transfinite have been given and studied. The Infinite Time Turing Machines ($ITTM$s) of Hamkins and Lewis generalized classical Turing machines to transfinite working time.
Ordinal Turing Machines ($OTM$s) (see \cite{OTM})  and Ordinal Register Machines ($ORM$s) further generalized this by allowing working space of ordinal size. 
Recently, a transfinite version of $\lambda$-calculus was introduced
and studied \cite{Sey}. It was soon noted (see e.g. \cite{Fi}) that the corresponding notion of computability enjoys a certain stability under changes of the machine model: For example, the sets of ordinals computable by $OTM$s and $ORM$s both coincide with
the constructible sets of ordinals.

A similar phenomenon is known from the models of classical computability: Turing machines, register machines, recursive functions, $\lambda$ calculus etc. all lead to the same class of computable functions. In the classical case, this is taken as evidence for
what is known as the Church-Turing-Thesis ($CTT$), i.e. the claim that these functions are exactly those computable in the `intuitive sense' by a human being following a rule without providing original input. This thesis plays an important role in mathematics:
It underlies, for example, the - to our knowledge undisputed\footnote{It has been remarked that there are challenges to the claim that no physical device could decide such questions, see e.g. \cite{Hog} and \cite{NeGe}. However, here we are interested in the capabilites of idealized computing agents. Whether what such devices do can be considered to be a computation in the intuitive sense rather than the observation of an incomputable process is a question we won't consider here.} - view that Matiyasevich's theorem \cite{Ma} settles Hilbert's $10$th problem or that Turing's work \cite{Tu1} settles the Entscheidungsproblem. The study of recursive functions gets a lot of its attraction from this well-grounded belief that they coincide with this intuitive notion of computability.

It therefore seems natural to ask whether something similar can be said about transfinite models of computation, i.e. whether these models are mere `ordinalizations' of the classical models or whether they actually `model' something, whether there is an intutive
concept of transfinite computability that is captured by these models: Hence, we ask for an infinitary Church-Turing-thesis ($ICTT$).\\
There seems to be some evidence that a satisfying $ICTT$ should be obtainable. Beside the stability of the corresponding notion of computability mentioned above, it also became common to describe and communicate the activity of such machines in rather 
informal terms: Rather than writing an actual program for e.g. deciding number-theoretical statements with an $ITTM$, it generally suffices to explain that the machine will e.g. `search through the naturals for a witness'. It usually soon becomes clear to someone
with a basic familiarity with these models that such a method can indeed be implemented and will lead to the right results. Indeed, we will usually find such a `process description' much easier to grasp than an actual implementation. 
This indicates that we indeed possess an intuitive understanding of what these machines can do which is based on an understanding of infinite processes rather than the formal definition of the machine. We aim at connecting infinitary models of 
computation with a natural notion. Here, `natural' means that the notion can be obtained and described independently from the models and that it is in some sense present in normal (mathematical) thinking. Such a notion should furthermore serve as
a background thesis explaining the equivalence of the different models, should (in analogy with the classical Church-Turing-thesis) justify the use of informal 'process descriptions' to prove the existence of formally specified programs and, ideally,
allow mathematically fruitful applications, similar to the role the classical $CTT$ plays in e.g. Hilbert's $10$th problem.\\

In this work, we offer evidence for the claim that notions of transfinite computation are indeed naturally present in mathematical (and possibly in everyday) thinking and that these notions are captured by the transfinite 
machine models we mentioned.\footnote{To be precise, we will argue for this claim in the case of $OTM$s and $ORM$s. Whether similar approaches are available for other models as well is briefly adressed at the end of this paper.}
This will allow us to formulate an $ICTT$.

This article is structured as follows: We begin by describing an approach of mathematical philosophy initiated by P. Kitcher \cite{Ki}, where mathematical objects are modelled as mental constructions of idealized agents. We also indicate that such idealizations
are indeed present in understanding mathematics. After that, we work towards a formal notion of a computing transfinite agent, obtaining the notion of an Idealized Agent Machine ($IAM$). Then, we show that the computational power of an $IAM$ coincides
with that of $OTM$s and $ORM$s (which we will summarize under the term `standard models' from now on). Finally, we state (a candidate for) an $ICTT$ and discuss whether it meets the above requirements.

\section{Idealized Constructions and Idealized Agents in Mathematics}

In this section, we briefly describe the view on the philosophy of mathematics described in [Kitcher].  We use his account as a demonstration that the concept
of transfinite agents can be motivated and has arisen completely independent from our considerations. Furthermore, we want to indicate how these views can be fruitful for infinitary computations (and vice versa) and bring them into interaction.
Finally, his work serves us as a first introduction to the notion of idealized agents. We will then demonstrate that this notion seems indeed to be present in 
mathematical language and understanding.

\subsection{Kitcher's idealized-agents-view of mathematics}

In a nutshell, Kitcher attempts to justify an empiricist account of mathematics by describing mathematics as an idealization of operations with real-world
objects like grouping them together, adding an object to a pile of objects etc. These actions in themselves already are a kind of primitive mathematics, limited by our practical constraints. What is usually called mathematics is obtained by forming a theory 
of idealized operations in a similar way that, say, a theory of idealized gases is formed: We abstract away from certain `complicating factors' like e.g. our factual incabability of indefinitely adding objects to a collection. 
Mathematics is then the study of idealized operations, or, equivalently, of the operations of idealized agents.

Upon reading this, one might wonder how this account is supposed to make sense of the large parts of mathematics which, like axiomatic set theory, deal with actual infinite objects. Kitcher's reply to this is simply that this is a mere question
of the degree of idealization:

\begin{quote}
\cite{Ki}, p. $146$: I see no bar to the supposition that the sequence of stages at which sets are formed is highly superdenumerable, that each of the stages corresponds to an instant in the life of the constructive subject, and that the subject's activity
is carried out in a medium \textit{analogous} to time, but far richter than time. (Call it `supertime'.) ... The view of the ideal subject as an idealization of ourselves does not lapse when we release the subject from the constraints of our time.
\end{quote}

Comparing Kitcher's account of axiomatic set theory with his treatment of arithmetic or intuitionistic mathematics, mathematical areas can roughly be characterized by the degree of idealization, i.e. by considering how remote the underlying operations
are from our actual capabilities. The agent working in `supertime' mentioned in the quote above seems to belong to a benchmark of idealization. As this is the degree of idealization corresponding to set theory in Kitcher's account, we will refer to
it as the `idealized agent of set theory' from now on.\footnote{Similar ideas are mentioned in other accounts on the philosophy of mathematics. For example, in \cite{Wa}, S. 182, we find the following: `The overviewing of an infinite range of objects presupposes an infinite intuition which is an idealization. 
Strictly speaking, we can only run through finite ranges (and perhaps ones of rather limited size only).'}

Not unexpectedly, several issues with this approach can and have been raised: E.g. about the ontological status of these idealized agents (discussed in \cite{Ho}), whether this degree of idealization still admits an explanation of the applicability
of mathematics, whether and how certain large cardinals can be accomodated in this account etc. Nevertheless, the imagination of an idealized agent or an idealized mental activity seems 
to be in the background of large parts of mathematical understanding in one way or the other.
%We think like this and formulate like this. 
In fact, there are numerous common figures of speech in mathematical textbooks and even more in spoken conversation that point to such (implicit) notions: For example, in many proofs of the Bolzano-Weierstra{\ss}-theorem, `we' 
are supposed to `pick' a number from a subintervall containing infintely many elements of a given sequence. One might find this problematic: In a naive sense, of course, we cannot do this, as in general, 
we will not know which intervall that is.\footnote{This is the reason why Bolzano-Weierstrass is intuitionistically invalid.} However, this problem doesn't seem to come up in understanding this proof. In fact, agent-based formulations
generally seem to increase understanding and make constructions more imaginable rather than leading into conflicts with our factual limitations.
A similar observation holds for e.g. proofs of the well-ordering principle from the axiom of choice, and in general for
many uses of transfinite recursion or transfinite induction. Another example would be the various places in mathematical logic where constructions are explained by interpreting them as transfinite `games' between two `players'.

\subsection{Degrees of idealization and the Church-Turing-Thesis}

In the Church-Turing-Thesis, recursiveness is stated to capture the intuitive meaning of `computable'. 
However, if the intuitive meaning of `computable' is taken as `possible for a human being working without understanding', then literally, this is of course false: 
What we can actually do is very limited: In general, a recursive function is far away from being computable by `a man provided with paper, pencil, and rubber, and subject to strict discipline' (\cite{Tu}).
But this fact is quite irrelevant for e.g. Hilbert's $10$th problem, which asks for a `finite' procedure, not a practical one. In the $CTT$, we are hence in fact facing a notion of an idealized computing subject.

Usually, this idealization goes from certain factual bounds to `arbitarily large, but finite'. But there seems to be a distinguished intuitive notion of computability going beyond this: For example, there is little to no trouble with 
the idea of testing all even numbers for being a sum of at most two primes. In fact, this thought experiment seems to be at least part of the reason the Goldbach conjecture is generally assumed to have a definite truth value.
On the other hand, no such intuition supports the idea of e.g. searching through $V$ looking for a bijection between $\mathbb{R}$ and $\aleph_{1}$, not even if one assumes $CH$ to have a definite truth value.\footnote{Searching through $L$ or its stages, 
on the other hand, seems again quite reasonable, as $L$ is canonically well-ordered.} 
The idea of a transfinite systematic procedure for obtaining certain objects or answering certain questions hence allows for a clear distinction: Not every formulation that at the surface looks like a `process description' is eligible as an indication of a 
computation of an idealized agent. Our goal is to find an exact characterization of those procedures that are.

\section{A model for idealized Agents}

%Why not take $ITTM$s? -> Because they seem unjustified. We will show that they can be re-obtained by an appropriate analysis.
Even if one accepts that, beyond finiteness, clear degrees of idealization of our activity can be concretely captured, the standard models are not as canonical a model of it as e.g. Turing machines are in the finite case. In the one direction,
it does indeed seem plausible that the actions of an $OTM$ are available to a transfinite idealized agent and that hence everything computable by an $OTM$ should be computable by such an agent: The aspects of an $OTM$-computation going beyond
classical computability consist in elementary limit operations like forming the limes inferior of a sequence of $0$s and $1$s. But the other direction is not as clear: 
For example, the limit rule of $OTM$s seems to be rather arbitrary. The intuition here is that other reasonable choices of limit rules will not change the class of computable objects, but it is exactly the intuition leading there 
that we want to capture here. We see no direct path from idealized agents to the standard models known so far. Our approach is hence to develop a formal notion of a transfinitely computing agent modelled after our intuition 
and then see how it relates to the standard models. It turns out that it does indeed describe the same notion of computability, which we consider a good piece of evidence for our thesis.

The notion we are about to develop will be called Idealized Agent Machines ($IAM$s). $IAM$s are meant to give a very liberal account of the computational activity of idealized agents. In fact, one might get the impression that what we model as 
a single step of an $IAM$ is really a series of lengthy sub-computations and that we are hence far to generous in attributing abilities to our idealized agent. However, we will demonstrate that even this liberal notion is equivalent to the standard models.
Therefore, we don't need to claim that $IAM$s are a very accurate model for the intuition of transfinite computations: we only need it to be strong enough to include that intuition. We can then argue that if such an intuition is really present - as we tried
to show above, then it is grasped by the standard models, as, in the end, we will arrive at the following implications:
\begin{center}
 $OTM$-computable\\$\underset{(1)}{\implies}$ computable by the idealized agent of set theory\\$\underset{(2)}{\implies} IAM$-computable\\$\underset{(3)}{\implies} OTM$-computable
\end{center}
Here, implication $(3)$, being a claim about two notions expressable in the language of set theory, is provable (in $ZFC$) and implication $(1)$ is very natural (see above). 
It is step $(2)$ that depends on the plausibility of the analysis and modelling we are about to give.

An ideal computing agent works as follows: 
%He has a finite set of instructions. 
At each time, he has a complete memory of his earlier computational activity. Also, he has a working memory where he may store information. We assume that the working memory consists of separate `places', each containing one symbol from a finite alphabet.\footnote{The finiteness of the alphabet could in fact be dropped without changing the class of computable functions we ultimately obtain. However, we consider this a reasonable assumption for the notion we are about to model and hence decided against
taking the effort to demonstrate this.}

The agent is working in according with instructions that determine his activity. Certainly, any kind of operation that can be considered an idealization of an activity we are actually capable of must be describable by finite means.
We hence stipulate that the instructions are given by some finite expressions. Based on the instructions, it must be possible at each time to determine what to do 
(e.g. which new symbols to write) on the basis of the computational activity so far. 

We propose to model this in the following way: There should be a first-order formula $\phi(x,y)$ such that, if the computational activity so far is given by $c$ and $p$ is a place in the memory,  $\phi(c,p,s)$ holds iff $s$ is 
the the symbol that should be written in place $p$ after $c$.
Here, it must be possible to evaluate $\phi$ by mere inspection of $c$. Even if `inspection' may be taken in an idealized sense here as well, this should certainly mean that the appearing quantifiers should in some sense be `bounded' by $c$. 
We will make this precise below.\footnote{The choice of first-order logic might be objectional; we feel that e.g. second-order logic would be inappropriate, for it would require the agent to have access to an external notion of
set which is not determined from his computational activity. However, we are certainly interested in plausible alternatives and whether they would turn out to lead to an equivalent notion of computability.}\\
This description does not depend on any assumptions on the structure of time. It is hence sufficiently general to yield a notion of transfinite computability once an appropriate notion of transfinite time is introduced.

\subsection{Supertime and Superspace}

In the passage quoted in the first paragraph, Kitcher suggests that set theory can be considered as the outcome of the mental activity of an idealized agent working in `a medium analogous to time, but far richer than time'. 
Here, we want to argue that the only sensible choice for such a medium are ordinals. 
 In his argumentation, it is also implicitely assumed that the agent not only has a non-standard working time, but also the ability to `store' the outcome of his work, e.g., infinite memory or at least infinite writing space. We will argue that it
is natural and harmless to assume that the writing space of an idealized agent is indexed by ordinals.

Certainly, we intend a notion of time as a medium of a deterministic computation to be a linear ordering. But we can say more.
The computational activity has to start at some point. Every other state may depend on this earlier state and hence has to take place at a moment after the starting point. Hence, the `medium of computation' has to have a unique minimal element.

Whenever the agent has carried out a certain amount of computational activity, he has to know what to do next, i.e. there must be a unique next state for him to assume. This next state has to take place at some point of time. Hence, the medium in which he
computes has to contain a unique next element after those through which the activity passed so far. Put differently: For every initial segment of time, there has to be a unique time point preceeded by all moments in the 
initial segment and only by those.
This leads to the following notion of `supertime': A `supertime' is a linearly ordered set\footnote{The outcome might be different if one would allow `class time'. We don't pursue this further here.} 
$(X,\leq)$ with a unique minimal element $\mu$ and such that, for every proper initial segment $I$ of $X$, there is a $\leq$-minimal $x_{I}\in X$ such that
$\forall{t\in I}t<x_{I}$. It is now easy to see that this means that all candidates for supertime are (isomorphic to) ordinals:

\begin{prop}
 Let $(X,\leq)$ be a linearly ordered set such that, for every $I\subsetneq X$ which is downwards closed (i.e. $x<y\in I$ implies $x\in I$), there is a minimal $x_{I}\in X$ such that $\forall{t\in I}t<x_{I}$. Then $(X,\leq)$ is isomorphic to an ordinal.
\end{prop}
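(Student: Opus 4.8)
The plan is to reduce the statement to the classical theorem that a linearly ordered set is order-isomorphic to an ordinal if and only if it is a well-order (every well-ordered set being isomorphic to a unique ordinal, e.g.\ via the Mostowski collapse). Since $(X,\le)$ is already assumed to be linear, it therefore suffices to show that every nonempty $S\subseteq X$ has a $\le$-least element.

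So fix a nonempty $S\subseteq X$ and consider its set of strict lower bounds
\[
 I_S:=\{x\in X:\ \forall s\in S\ x<s\}.
\]
First I would check that $I_S$ is downwards closed and a proper subset of $X$: downward closure is immediate from transitivity of $<$, and $I_S\ne X$ because any $s_0\in S$ satisfies $s_0\not<s_0$ and hence $s_0\notin I_S$. This makes the hypothesis applicable to $I_S$, yielding a $\le$-minimal $x_{I_S}$ with $t<x_{I_S}$ for all $t\in I_S$. The claim is then that $x_{I_S}=\min S$.

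To prove this claim I would argue in two steps, using linearity (trichotomy) throughout. First, $x_{I_S}$ is a lower bound of $S$: if some $s\in S$ had $s<x_{I_S}$, then since every $t\in I_S$ satisfies $t<s$, the element $s$ would itself be a strict upper bound of $I_S$ lying strictly below $x_{I_S}$, contradicting the minimality of $x_{I_S}$. Second, $x_{I_S}\in S$: since $x_{I_S}$ is a strict upper bound of $I_S$ it cannot belong to $I_S$ (otherwise $x_{I_S}<x_{I_S}$), so $x_{I_S}$ is not a strict lower bound of $S$; hence there is $s\in S$ with $s\le x_{I_S}$, and combined with the lower-bound property $x_{I_S}\le s$ this forces $s=x_{I_S}\in S$. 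Thus $x_{I_S}$ is the least element of $S$, $(X,\le)$ is well-ordered, and the classical theorem finishes the proof.

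The argument is mostly bookkeeping; the one point requiring care --- and the place where the full strength of the hypothesis is used --- is the first step, where the \emph{minimality} of $x_{I_S}$ (rather than the mere existence of an upper bound of $I_S$) is exactly what rules out elements of $S$ lying below $x_{I_S}$. It is worth noting that the degenerate instance $I_S=\emptyset$ (which occurs for $S=X$) is the special case producing the global least element, so no separate assumption of a minimum is needed.
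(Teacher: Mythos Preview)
Your proof is correct and follows essentially the same route as the paper: both arguments take an arbitrary (nonempty) subset, form its set of strict lower bounds as the downwards-closed proper initial segment to which the hypothesis is applied, and then verify that the resulting $x_{I_S}$ is the least element of the subset, concluding via the classical characterization of ordinals as well-orders. The only cosmetic difference is the order of the two verification steps and that the paper singles out the case $I=\emptyset$ at the start, which you correctly observe is subsumed by the general argument.
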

\begin{proof} 
Note that $\emptyset$ is downwards closed in $(X,\leq)$ and let $\mu:=x_{\emptyset}$. Obviously, $\mu$ is the unique minimal element of $X$.\\
Let $A\subseteq X$. Consider the set $Y:=\{x\in X|x<A\}$. It is easy to see that $Y$ is an initial segment of $X$. We claim that $x_{Y}$ is a minimal element of $A$.\\
To see that $x_{Y}\in A$, assume otherwise. As every element smaller than $x_{Y}$ is in $Y$ and hence smaller than every element of $A$, it follows that $x_{Y}<A$.
But this implies $x_{Y}\in Y$, so $x_{Y}<x_{Y}$, a contradiction. So $x_{Y}\in A$ and every $z<x_{Y}$ satisfies $z\notin A$. Thus $x_{Y}$ is indeed a minimal element of $A$. As $\leq$ is linear, $x_{Y}$ is unique with this property.\\
This implies that $(X,\leq)$ is a well-ordered set. Hence, it is isomorphic to an ordinal.
\end{proof}

However, not all ordinals are suitable as such a medium: For example, if our medium allows two procedures to be carried out, it should also allow to carry out one after the other. Also, it should be possible to have a procedure as a `subroutine' of another to 
be repeatedly called by the other. Finally, the class of ordinals itself provides an attractive unification of appropriate computation times; hence we allow computations carried out without fixing a particular ordinal in advance.\\
Appropriate candidates for supertime hence turn out to be ordinals which are closed under ordinal addition and multiplication and $On$ itself. In the following, we will - for the sake of simplicity -  focus on the broadest case where the underlying time is $On$. Note that this notion of 
supertime matches well with the way transfinite constructions are commonly communicated and imagined: It is completely normal to relate stages of such a construction by expressions coming 
from the relation of time points and state that e.g. `earlier on, we made sure that'. In fact, it is hard to talk about transfinite constructions avoiding such expressions.

We imagine our agent to be equipped with a sufficient supply of place for writing symbols. We assume that this space is partioned into
slots and that each slot is uniquely recognizable. There is a canonical well-ordering on the set of used slots: Namely, each slot is at 
some point of time used for the first time. Via this property, this slot is henceforth identifiable. We may hence assume for our convenience that the slots are indexed with ordinals from the very beginning: 
That is, the working memory is at any time a function from some ordinal
$\alpha$ into the set $S$ of symbols.\footnote{This point could be strengthened by modelling space in a more general way and then proving the resulting notion to be equivalent with the one obtained here. 
However, this requires a cumbersome analysis and the gain in plausibility seems to be too limited to justify it.}

Finally, even if we allow - as we will - several symbols to be re-written in one step, an adequate model of computing time and space should also impose some bounds on the space that can be actually used after computing for $\tau$ many steps.
We model this intuition by the extra condition that, at time $\tau$, only slots with index in $\tau$ may contain written symbols.\footnote{This condition may seem to be too strict compared to the overall very liberal model we set up. However, this choice 
is technically the least cumbersome; furthermore, we conjecture from our experience so far that every bound that is reasonably explicit in $\tau$ will ultimately lead to the same class of computable functions.}

\subsection{Idealized Agent Machines}

We will now describe a formal model for the concept developed above. The instructions will be given by a first-order statement in an appropriate language, which can be evaluated on the basis of an initial segment of a computation.

We let $L_{c}$ be the first-order language with equality, a binary function symbol $C(x,y)$ and a binary relation symbol $\leq$. The intended meaning of $C(x,y)=z$ is that, at time $x$, $z$ is the symbol in the $y$th place, while $\leq$ is the
ordering relation of ordinals.\\ 
If $A$ is a finite set (the alphabet) and $\tau$ an ordinal, then a $\tau$-state for $A$ is a function $f:\alpha\rightarrow A$, where $\alpha\leq\tau$. We denote the class of $\tau$-states for $A$ by $S_{A}^{\tau}$.\\
%A will be the symbols our machine uses. 
A function $F$ with $dom(F)=:\tau\in On$ and $F(\iota)\in S_{A}^{\iota}$ for all $\iota<\tau$ is called an $A$-$\tau$-precomputation. For $F$ an $A$-$\tau$-precomputation, an $L_{c}$-formula $\phi$, $\vec{s}\in A^{<\omega}$, $\vec{\alpha}\in(\tau+1)^{<\omega}$, we 
define $[\phi(\vec{\alpha},\vec{s})]_{\tau}^{F}$, the truth value of $\phi(\vec{\alpha},\vec{s})$ in $F$, recursively (omitting the parameters where possible): $[C(\alpha,\beta)=x]_{\tau}^{F}=1$ if $\alpha<\beta$ or $F(\alpha)(\beta)=x$, otherwise $[C(\alpha,\beta)=x]_{\tau}^{F}=0$; $[x\leq y]_{\tau}^{F}=1$ iff
$x,y\in On$ and $x\leq y$, otherwise $[x\leq y]_{\tau}^{F}=0$; $[x=y]_{\tau}^{F}=1$ iff $x=y$, otherwise $[x=y]_{\tau}^{F}=0$; $[\neg\phi]_{\tau}^{F}=1-[\phi]_{\tau}^{F}$; $[\phi\wedge\psi]_{\tau}^{F}=[\phi]_{\tau}^{F}[\psi]_{\tau}^{F}$; 
and $[\exists{x}\phi(x)]_{\tau}^{F}=1$ iff there is $\iota\in\tau$ such that $[\phi(\iota)]_{\tau}^{F}=1$, otherwise $[\exists{x}\phi(x)]_{\tau}^{F}=0$.\\
An $L_c$-formula $\phi(x,y,z)$ is an $IAM$-program iff, for all $\tau\in On$, $\alpha\leq\tau$ and all $A$-$\tau$-precomputations $F$, there is exactly one $s\in A$ such that $[\phi(\tau,\alpha,s)]_{\tau}^{F}=1$.
%It allows our idealized agent to determine his next actions from the activity carried out so far. This is very similar to a set-theoretical recursion; the difference
%lies in the restriction of the quantifiers to the past computation.\\
If $\phi$ is an $IAM$-program, $A$ a finite set, $\tau\in On$ and $F$ an $A$-$\tau$-precomputation, then we define $\mathbb{S}_{\phi,\tau,F}:\tau\rightarrow A$, the state of the $IAM$-computation with $\phi$ at time $\tau$ after $F$, 
by letting $\mathbb{S}_{\phi,\tau,F}(\alpha)$ be the unique $s\in A$ such that $\phi(F,\alpha,s)$ holds for $\alpha<\tau$.\\
Furthermore, we define $\mathbb{I}_{\phi}^{\tau}$, the $\tau$-th initial segment of the $IAM$-computation with $\phi$ at time $\tau$, recursively by letting $\mathbb{I}_{\phi}^{0}:=\emptyset$,
$\mathbb{I}_{\phi}^{\tau+1}:=\{(\tau,\mathbb{S}_{\phi,\tau,\mathbb{I}_{\phi}^{\tau}})\}\cup\mathbb{I}_{\phi}^{\tau}$
and $\mathbb{I}_{\phi}^{\lambda}:=\bigcup_{\iota<\lambda}\mathbb{I}_{\phi}^{\iota}$ for $\lambda$ a limit ordinal.\\
So far, our machines have no notion of halting. We therefore assume that all our $IAM$s have a special symbol $\mathbb{H}$ in their alphabet. 
The $IAM$-computation by $\phi$ is said to have stopped at time $\tau$ iff $\mathbb{I}_{\phi,\tau}(\tau)(0)=\mathbb{H}$, i.e. if the first symbol in the memory at time $\tau$ is $\mathbb{H}$.\\
An $IAM$-computation by $\phi$ will hence start with an empty tape and then repeatedly apply the $\mathbb{S}$-operator to obtain the next state, taking unions at limits.\\
It is easy to see from the boundedness of the formula evaluated in each step that this notion of computability is absolute insofar $IAM$-computations are absolute between transitive models of $ZFC$. We can also account for computations with a non-empty input 
and computations with parameters in these terms by adjusting the initial memory content. %In particular, we can use this to emulate calculations with finitely many ordinal parameters, as shown in the following definition.

\begin{defini} $X\subseteq On$ is $IAM$-computable iff there exists an $IAM$-program $\phi$ such that, for every $\alpha\in On$, there is $\tau\in On$ such that, if $\chi_{\alpha}$ is the characteristic function of
$\alpha$ in $On$ and $F=(0,\chi_{\alpha})$, we have $\mathbb{S}_{\phi,\tau,F}(0)=\mathbb{H}$ and $\mathbb{S}_{\phi,\tau,F}(1)=1$ iff $\alpha\in On$.\\
Similarly, $f:On\rightarrow On$ is $IAM$-computable iff there is an $IAM$-program $\phi$ such that, for every $\alpha\in On$, there is $\tau\in On$ such that $\mathbb{S}_{\phi,\tau,F}(0)=\mathbb{H}$, $\mathbb{S}_{\phi,\tau,F}(f(\alpha)+1)=1$ and
$\mathbb{S}_{\phi,\tau,F}(\iota)=0$ for $\iota\notin\{0,f(\alpha)+1\}$, where again $F=(0,\chi_{\alpha})$ and $\chi_{\alpha}$ is the characteristic function of $\alpha$ in $On$.\\
We say that a set $X\subseteq On$ or a function $f:On\rightarrow On$ is $IAM$-computable from finitely many ordinal parameters iff there exists a finite set $p\subset On$, an $IAM$-program $\phi$ using the alphabet $A$ and an $a\in A$
such that $\phi$ computes $X$ (or $f$, respectively) when the following change is made for all $\tau<\alpha\in On$ in the definition of the $\alpha$-th state $\mathbb{S}_{\phi,\alpha,\mathbb{I}_{\phi}^{\alpha}}$: If $\beta\in p$, then
$\mathbb{S}_{\phi,\alpha,\mathbb{I}_{\phi}^{\alpha}}(\beta)$ is set to $a$.
\end{defini}

\section{Idealized Agent Machines, ordinal computability and the $ICTT$}

Having developed our formal model for infinitary computations, it is now rather straightforward to show that, in terms of computability, it is equivalent to the standard models. As the elobarate versions are quite long and cumbersome, 
we merely sketch the arguments here. \\
\begin{lemma}{\label{element}}
(a) There is an $L_{c}$-formula $\phi_{lim}$ such that, for any precomputation $F$ with $dom(F)=\tau$, we have $[\phi]_{\tau}^{F}=1$ iff $\tau$ is a limit ordinal. Furthermore, the statement $\alpha=\beta+1$ is expressable by an $L_{c}$-formulas
$succ(\alpha,\beta)$.\\
(b) Let $A\subset\omega$ be finite. There is an $L_{c}$-formula $\phi_{liminf}(x,y)$ such that, for any $\tau\in On$, $a\in On$, $b\in A$ and any $A$-$\tau$-precomputation $F$, $[\phi_{liminf}(a,b)]_{\tau}^{F}$ holds iff 
$b=\liminf{((F(\iota))(a))_{\iota<\tau}}$. \\
(c) Let $P$ be an $OTM$-program, and let $\sigma=(i,\alpha,t)$ be a triple coding a state in the computation with $P$, where $i$ is codes the current state of the program, $\alpha$ the head position and
$t:\tau\rightarrow\{0,1\}$ the tape content. There are $L_{c}$-formulas $\phi^{P}_{state}(i,\alpha,t,j)$, $\phi^{P}_{head}(i,\alpha,t,\beta)$ and $\phi^{P}_{tape}(i,\alpha,t,s)$ such, for any pre-computation $F$ with $dom(F)=\gamma+1$,
$[\phi_{state}(i,\alpha,t,j)]_{\gamma+1}^{F}=1$, $[\phi^{P}_{head}(i,\alpha,t,\beta)]_{\gamma+1}^{F}=1$ and $[\phi^{P}_{tape}(i,\alpha,t,s)_{\gamma+1}^{F}=1$ hold iff applying $P$ in the state $\sigma$ leads into
the new state $(j,\beta,t^{\prime})$, where $t^{\prime}:\tau+1\rightarrow\{0,1\}$ is given by $t^{\prime}(\alpha)=s$ and $t^{\prime}(\zeta)=t^{\prime}(\zeta)$ for $\zeta\neq\alpha$.
\end{lemma}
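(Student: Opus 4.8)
The plan is to construct each formula by directly translating the defining recursion of the relevant transfinite operation into the bounded first-order language $L_c$, exploiting the fact that the evaluation relation $[\cdot]_\tau^F$ already lets us read off, via the term $C(x,y)$, the full content of the precomputation $F$ up to time $\tau$. Throughout, the key technical point is that the quantifier $\exists x$ in $[\cdot]_\tau^F$ ranges exactly over $\iota \in \tau$, so all quantification is automatically bounded by the current time; this is what keeps the formulas well-defined and absolute. I would treat the three parts essentially independently, building up from the simplest.

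For part (a), I would first express $succ(\alpha,\beta)$, the relation $\alpha = \beta+1$, as: $\beta \leq \alpha \wedge \beta \neq \alpha \wedge \neg \exists z\,(\beta \leq z \wedge z \leq \alpha \wedge z \neq \beta \wedge z \neq \alpha)$, i.e. $\beta < \alpha$ with nothing strictly between. Then $\phi_{lim}$ should assert that $\tau$ (the current time, accessible as the object about which the whole formula is evaluated) is a nonzero ordinal with no immediate predecessor; since $\tau$ itself is not a free variable in the $L_c$-formula, I would instead express ``$\tau$ is a limit'' indirectly by stating that the time domain contains some element and that every element has a successor below $\tau$ — concretely, a formula saying $\exists x\,(x=x)$ (so $\tau \neq 0$) together with $\forall x\,\exists y\,(succ(y,x))$, using that $\exists y$ ranges over $\iota < \tau$ so that a successor of each $x<\tau$ is again $<\tau$ precisely when $\tau$ is a limit. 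Verifying that this captures limithood and no other case is routine but must be checked against the exact semantics of $[\exists x\phi]_\tau^F$.

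For part (b), the $\liminf$ of a binary-valued sequence $((F(\iota))(a))_{\iota<\tau}$ equals $b$ iff either $b$ is the eventual constant value, or, in the general finite-alphabet $A \subset \omega$ case, $b$ is the least value attained cofinally often. I would encode this by a formula stating, for each candidate $b \in A$, that $b$ is attained at unboundedly many stages below $\tau$ and that every $c < b$ is attained only at boundedly many stages; unboundedness of attainment of $b$ reads as $\forall x\,\exists y\,(x \leq y \wedge C(y,a)=b)$, and boundedness of $c$ reads as $\exists x\,\forall y\,(x \leq y \rightarrow \neg(C(y,a)=c))$, with the finitely many values $c < b$ written out as an explicit finite conjunction since $A$ is a fixed finite set. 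Here the finiteness of $A$ is essential: it lets me replace what would be a quantification over values by a finite Boolean combination, so the result is genuinely first-order in $L_c$.

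For part (c), the single-step transition function of a fixed $OTM$-program $P$ is, by definition of $OTM$s, given by a finite table, and a successor step of an $OTM$ at a stage $\gamma+1$ reads the symbol under the head, consults the table entry for the current program-state and scanned symbol, and writes the new symbol, moves the head, and updates the program state — all bounded, finitely-branching operations. Since $P$ has finitely many program states and the tape alphabet is $\{0,1\}$, I would write $\phi^P_{state}$, $\phi^P_{head}$ and $\phi^P_{tape}$ as explicit finite disjunctions over the (finitely many) table entries, each disjunct asserting the appropriate relation between the scanned symbol $C(\gamma,\alpha) $ extracted from the precomputation, the coded current state $i$, and the resulting values $j$, $\beta$, $s$; the head-move operation $\beta = \alpha \pm 1$ is expressible using $succ$ from part (a) (and, for a leftward move at a limit head position, again using $\phi_{lim}$). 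The main obstacle, and the step I expect to demand the most care, is the \emph{coding}: the state triple $\sigma = (i,\alpha,t)$ must be presented as ordinal data that the formula can manipulate, and one must be completely explicit about how the tape content $t$ is read off from $F$ and how the updated tape $t'$ is specified so that the stated equivalences hold verbatim. Getting the coding conventions to line up — in particular ensuring every reference to $t$, $\alpha$ and $i$ is a genuinely bounded $L_c$-formula rather than an informal abbreviation — is where the real work lies, even though each individual clause is elementary.
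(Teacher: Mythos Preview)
Your proposal is correct and follows essentially the same approach as the paper: in each part you translate the defining condition directly into $L_c$ using that quantifiers range over $\tau$ and that the alphabet and transition table are finite, exactly as the paper does. The only cosmetic differences are that the paper writes $\phi_{lim}$ as $\forall x\,\exists y\,(x\leq y\wedge x\neq y)$ rather than via $succ$, and for the $\liminf$ it encodes the order on $A$ by a single defined relation $b\leq C(z,a)$ (a finite disjunction) instead of your explicit finite conjunction over the values $c<b$; these are interchangeable encodings of the same idea.
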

\begin{proof}
(a) Take $\phi_{lim}$ to be $\forall{x}\exists{y}(x\leq y\wedge\neg(x=y))$. First assume that $\tau$ is a\\ limit ordinal. Then $[\phi_{lim}]_{\tau}^{F}=1-[\exists{x}\forall{y}(\neg(x\leq y)\vee x=y))]_{\tau}^{F}$. Now\\
$[\exists{x}\forall{y}(\neg(x\leq y)\vee x=y))]_{\tau}^{F}=1$ iff there exists $x\in\tau$ with $[\forall{y}(\neg(x\leq y)\vee x=y)]=1$, which is equivalent to 
$[\neg\exists{y}(x\leq y\wedge x\neq y)]_{\tau}^{F}=1\leftrightarrow[\exists{y}(x\leq y\wedge x\neq y)]_{\tau}^{F}=0$, which means that there is no $y<\tau$ such that $[x\leq y\wedge x\neq y]_{\tau}^{F}=1$, i.e. such that
$x\leq y\wedge x\neq y$ holds. But such an $x$ obviously cannot exist if $\tau$ is a limit ordinal. The other direction works in the same way, again by simply unfolding the definition of the truth predicate. The second statement is 
similarly immediate.

(b) As $A=\{a_1,...,a_n\}$ is finite, we can define $\leq$ on $A$ by taking $a<b$ to be $\bigvee_{a_{i}\leq b}a_{i}=a$. Now take $\phi(a,b)$ to be $\exists{x}\forall{z}(x\leq z\implies b\leq C(z,a)\wedge\forall{x}\exists{z}(x\leq z\wedge C(z,a)=b)$.

(c) The required formulas are immediate from $P$ and the fact that limit ordinals are $L_{c}$-definable. To give an example, if $P$ requires to change from state $i$ to state $j_{1}$ when the symbol under the reading head (at position $\alpha$) 
is currently $\iota_{1}$ and to state $j_{2}$ when the symbol is $\iota_{2}$, we can express this through the $L_{c}$-formula \\
$\phi_{i}(\alpha,j)\equiv \exists{\gamma}(((\neg\exists{\beta}succ(\gamma,\beta)\wedge((C(\gamma,\alpha)=\wedge j=j_{1})\vee(C(\gamma,\alpha)=\wedge j=j_{2})))$. 
%Here $\beta$ is the last state in the pre-computation $F$.
\end{proof}

\begin{thm}
 Let $f:On\rightarrow On$ be $OTM$-computable. Then $f$ is $IAM$-computable.
\end{thm}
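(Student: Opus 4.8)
The plan is to simulate a fixed $OTM$-program $P$ computing $f$ by an $IAM$, running the two machines in lockstep so that $IAM$-time coincides with $OTM$-time. The central idea is that at every time $\tau$ the working memory of the $IAM$ encodes the entire $OTM$-configuration reached after $\tau$ steps: the tape contents of $P$ at time $\tau$ are stored on a designated family of slots, the (finite) program state of $P$ is recorded by finitely many dedicated symbols on a fixed finite block of slots, and the head position $\alpha$ is recorded by writing a distinguished marker symbol $h$ on the slot with index $\alpha$, so that the head position is recovered as the unique marked slot. Because at $OTM$-time $\tau$ the head position and every written tape cell have index below $\tau$, this encoding respects the $IAM$ space bound that only slots with index in $\tau$ may be written at time $\tau$.

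First I would describe the initialisation: at time $0$ the memory is set up from the input $\chi_\alpha$ so that it codes the starting configuration of $P$ on input $\alpha$ (initial state, head at $0$, input on the tape). The $IAM$-program $\phi(x,y,z)$ --- read as ``at time $x$, the symbol to be written on slot $y$ is $z$'' --- is then assembled as a large first-order case distinction on the time argument $x$, the three cases being detected by the formulas of Lemma~\ref{element}(a). If $x=\gamma+1$ is a successor, witnessed by $succ$, the program reads off the coded configuration $(i,\alpha,t)$ from the immediately preceding state (accessed through the terms $C(\gamma,\cdot)$) and writes the new configuration supplied by the transition formulas $\phi^{P}_{state}$, $\phi^{P}_{head}$ and $\phi^{P}_{tape}$ of Lemma~\ref{element}(c); here the tape argument $t$ is not passed as a variable but is realised implicitly by references to $C(\gamma,\cdot)$, the head argument $\alpha$ is bound to the unique $h$-marked slot at time $\gamma$, and slots away from the head are copied unchanged. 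If $x$ is a limit, detected by $\phi_{lim}$, the program must reproduce the limit rule of $P$: each tape slot and each program-state slot is set to the $\liminf$ of its earlier values by Lemma~\ref{element}(b), while the new head position is set to the $\liminf$ of the earlier head positions.

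The one place where Lemma~\ref{element} does not apply off the shelf is the head position at limit times: part (b) computes the $\liminf$ of a sequence ranging over a fixed finite alphabet, whereas the head positions form a sequence of arbitrary ordinals below the current limit $\lambda$. I would therefore supply an auxiliary $L_c$-formula expressing, with quantifiers bounded by $\lambda$, that $\beta$ equals $\sup_{\eta<\lambda}\inf_{\eta\le\iota<\lambda}\beta_\iota$, where $\beta_\iota$ denotes the head position at time $\iota$, itself $L_c$-definable as the unique $\beta_\iota$ with $C(\iota,\beta_\iota)=h$. Since at time $\lambda$ the full precomputation $\mathbb{I}_{\phi}^{\lambda}=\bigcup_{\iota<\lambda}\mathbb{I}_{\phi}^{\iota}$ is available and all relevant ordinals lie below $\lambda$, this $\liminf$ is genuinely definable by an $L_c$-formula with quantifiers bounded by $\lambda$, so the marker can be placed correctly on exactly one slot. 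This matches the union-at-limits semantics of $IAM$-computations, under which the time-$\lambda$ state is obtained by evaluating $\phi$ against the assembled record of all earlier states.

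Finally I would handle halting and output normalisation: when the coded program state becomes the halting state of $P$, a short terminal phase rewrites the tape into the format demanded by the definition of $IAM$-computability --- a single $1$ on slot $f(\alpha)+1$, $0$ elsewhere, and $\mathbb{H}$ on slot $0$ --- which is a straightforward bounded manipulation once $f(\alpha)$ has been read off the $OTM$ output. Correctness then follows by transfinite induction on $\tau$, showing that the time-$\tau$ $IAM$-state codes the time-$\tau$ $OTM$-configuration, the successor step using Lemma~\ref{element}(c) and the limit step using Lemma~\ref{element}(b) together with the head-$\liminf$ formula above. \textbf{The main obstacle} I anticipate is twofold, and both aspects live at the limit stages: first, engineering the single formula $\phi$ so that for every $\tau$, slot and precomputation there is \emph{exactly one} admissible symbol (the well-definedness required of an $IAM$-program), which forces the successor, limit and halting clauses to be mutually exclusive and jointly exhaustive; and second, correctly capturing the ordinal $\liminf$ of the head positions, since this is the one ingredient not already delivered by Lemma~\ref{element} and the one most sensitive to the details of the chosen encoding.
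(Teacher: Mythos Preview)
Your proposal is correct and follows essentially the same route as the paper: encode each $OTM$-configuration as an $IAM$-state by storing the program state in a fixed slot, the tape contents in the remaining slots, and the head position via a marker symbol at the corresponding slot, then use Lemma~\ref{element} to assemble the transition formula. Your treatment is in fact more detailed than the paper's, which only sketches the encoding (using symbols $2$ and $3$ to combine the head marker with the underlying bit rather than a separate symbol $h$) and does not explicitly isolate the ordinal-$\liminf$ issue for the head position at limit times that you correctly flag and resolve.
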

\begin{proof}
Let $P$ be an $OTM$-program for computing $f$. Suppose wlog that $P$ uses $s\geq 3$ many states and put $A:=\{0,1,...,s\}$. We will represent states of the $OTM$-computation as sequences $(a_i|i\in\alpha)$ where $a_0\in\{1,2,...,s\}$ codes the 
inner state of the machine and the $a_{\iota}$ code the tape content. Let $b_{i}=a_{i+1}$ for $i\in\omega$ and $b_{\iota}=a_{\iota}$ otherwise. 
To express the head position, we put $b_{\iota}=2$ if the $\iota$th cell of the Turing tape contains a $0$ and the head is currently at position $\iota$, $b_{\iota}=3$ if the $\iota$th tape content is $1$ and the head is currently at 
position $\iota$; otherwise, the $b_{\iota}$ will just agree with the tape content.\\
Using the last lemma, one can now construct an $L_{c}$-formula $\phi$ such that $\mathbb{I}_{\phi}^{\alpha}$ represents the state and tape content of $P$ at time $\alpha$ in the way we described.
\end{proof}

\begin{thm}
 Let $x\subset On$ be a set of ordinals. Then $x$ is $IAM$-computable from a finite set of ordinals iff it is $OTM$-computable from a finite set of ordinals.
\end{thm}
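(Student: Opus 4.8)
The plan is to prove the two implications separately, reusing the work already done. For the direction from $OTM$- to $IAM$-computability I would reduce to the preceding theorem: a set $x\subseteq On$ is $OTM$-computable (from parameters) precisely when its characteristic function $\chi_x:On\to\{0,1\}$ is, and the preceding theorem already turns any $OTM$-program for a function into an $IAM$-program simulating it step by step. The only genuinely new point is the treatment of the finite parameter set $p=\{\gamma_1,\dots,\gamma_k\}$: on the $OTM$ side these ordinals are presented by marking the cells $\gamma_1,\dots,\gamma_k$ of the input, and the parameter clause of the Definition lets an $IAM$ mark exactly the slots in $p$ with a distinguished symbol $a$ at every stage. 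The simulating $IAM$ therefore recovers the $\gamma_i$ from the marked slots and then runs the simulation of the preceding theorem unchanged, so $OTM$-computability from finitely many ordinals implies $IAM$-computability from finitely many ordinals.

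For the converse I would let an $OTM$ simulate the $IAM$. Fix an $OTM$-computable ordinal pairing function and use it to store a precomputation $F$ with $dom(F)=\tau$ — equivalently the initial segment $\mathbb{I}_\phi^\tau$ — as a single subset of $On$, writing the symbol $F(\iota)(\beta)\in A$ (coded by finitely many cells, as $A$ is finite) into the block indexed by $\langle\iota,\beta\rangle$. The heart of the argument is a subroutine that, given such a code together with $\tau$ and parameters $\vec\alpha,\vec s$, computes the truth value $[\phi(\vec\alpha,\vec s)]_\tau^F$. This is done by recursion on the (finite) syntax of $\phi$: atomic formulas are read off the code directly, Boolean connectives are immediate, and a bounded existential $\exists x\,\psi(x)$ is evaluated by an $OTM$ loop over all $\iota\in\tau$ that sets a flag as soon as a witness is found. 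The main $OTM$-program then carries out the transfinite recursion defining $\mathbb{I}_\phi^\tau$: at a successor stage it computes $\mathbb{S}_{\phi,\tau,\mathbb{I}_\phi^\tau}(\alpha)$ for each $\alpha<\tau$ by running the subroutine over the finitely many candidate symbols $s\in A$ and appending the new pair to the stored history; reading off the designated output slots once the halting symbol $\mathbb{H}$ appears then yields $x$, the parameters $p$ being supplied to the $OTM$ as ordinal inputs.

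The step I expect to be the main obstacle is making the limit stages of this recursion agree. The $IAM$ forms $\mathbb{I}_\phi^\lambda=\bigcup_{\iota<\lambda}\mathbb{I}_\phi^\iota$, whereas an $OTM$ passes through limits by the $\liminf$ rule applied to each cell. I would reconcile the two by arranging the coding so that the block of cells recording a history entry $(\iota,\mathbb{S}_{\phi,\iota,\dots})$ is only ever switched on, and never switched off, once $\iota$ has been processed; the coded history is then monotone in the $OTM$ time, so the cellwise $\liminf$ reproduces exactly the union and the simulation stays synchronized. This bookkeeping, rather than any conceptual difficulty, is the delicate part.

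Alternatively, the converse can be obtained with essentially no simulation at all. The remark that $IAM$-computations are absolute between transitive models of $ZFC$ shows that an $IAM$-computation from ordinal parameters yields the same output when carried out inside $L$, so every $x\subseteq On$ that is $IAM$-computable from finitely many ordinals is constructible. Since, as recalled in the introduction, the sets of ordinals $OTM$-computable from ordinal parameters are exactly the constructible ones, such an $x$ is $OTM$-computable from a finite set of ordinals. I would present the explicit simulation as the main line, since it parallels the preceding theorem, and note this absoluteness argument as the quick route that makes implication $(3)$ provable in $ZFC$.
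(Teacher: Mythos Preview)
Your proposal is correct, but you invert the paper's emphasis. For the forward direction you and the paper agree: adapt the preceding simulation of an $OTM$ by an $IAM$ to incorporate finitely many ordinal parameters via the parameter clause of the definition. For the converse, however, the paper's main (and only) argument is precisely what you relegate to an alternative ``quick route'': since $IAM$-computations are definable in $L$, any $x$ that is $IAM$-computable from finitely many ordinals lies in $L$, and by Koepke's theorem every constructible set of ordinals is $OTM$-computable from finitely many ordinal parameters; both classes therefore coincide with the constructible sets of ordinals. The explicit step-by-step simulation of an $IAM$ by an $OTM$ that you propose as your main line---coding the precomputation via a pairing function, evaluating $[\phi(\vec{\alpha},\vec{s})]_{\tau}^{F}$ by recursion on the syntax of $\phi$, and arranging monotonicity so that the cellwise $\liminf$ at limit stages reproduces the union---is essentially the argument the paper reserves for the \emph{next} theorem, where parameters are forbidden and the $L$-shortcut is no longer available. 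Your route buys uniformity (one simulation handles both the parametrized and parameter-free cases), while the paper's route buys brevity here at the cost of still needing the simulation later.
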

\begin{proof}
By \cite{Koe}, $x\subseteq On$ is $OTM$-computable from finitely many ordinal parameters iff $x\in L$. But it is not hard to see by adapting the theorem above that $OTM$-computations in finitely many parameters can be simulated by an $IAM$ that
hence every $OTM$-computable $x$ is also $IAM$-computable. 
On the other hand, as $IAM$-computations are definable in $L$, every $x$ $IAM$-computable from finitely many ordinal parameters must be an element of $L$. Hence the classes of $IAM$-computable sets of ordinals and $OTM$-computable sets
of ordinals both coincide with the constructible sets of ordinals and hence with each other.
\end{proof}

\begin{thm}
 $f:On\rightarrow On$ is $IAM$-computable iff it is computable by an ordinal Turing machine ($OTM$) without parameters.
\end{thm}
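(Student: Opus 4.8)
The plan is to prove the two implications separately. For the direction from $OTM$-computability without parameters to $IAM$-computability, I would simply invoke the theorem established above: given an $OTM$-program $P$ computing $f$, the construction there produces an $IAM$-program $\phi$ whose initial segments $\mathbb{I}_\phi^\alpha$ encode the successive states of $P$. That construction is uniform in $P$ and introduces no set parameters — it merely reads off the finite transition table of $P$ into the $L_c$-formula via Lemma \ref{element}(c) — so a parameter-free $P$ yields a parameter-free $\phi$, and the output convention of the $IAM$ can be matched to that of the $OTM$. Hence $OTM$-computability without parameters implies $IAM$-computability.

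The substantial direction is the converse: every $IAM$-computable $f$ is $OTM$-computable without parameters. Note first that the parametrized equivalence proved above does not suffice here, since passing through definability in $L$ together with Koepke's theorem would only yield $OTM$-computability \emph{from} parameters; I therefore give a direct, uniform simulation. I would fix an $IAM$-program $\phi$ computing $f$ and design a single $OTM$-program, with $\phi$ hard-coded as a finite string, that simulates the $IAM$-computation stage by stage. The key data maintained on the ordinal tape is a code for the computation history $\mathbb{I}_\phi^\tau$; using an $OTM$-computable ordinal pairing I would store the sequence of states $\langle F(\iota) : \iota<\tau\rangle$ as a single subset of $On$, so that from the tape the machine can recover $F(\iota)(\beta)$ for any $\iota,\beta<\tau$.

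The heart of the simulation is the evaluation of $[\phi(\tau,\alpha,s)]_\tau^F$, which I would implement by the Tarski-style recursion on the structure of $\phi$ exactly as defined in the excerpt: the atomic clauses for $C(\alpha,\beta)=x$, for $\leq$ and for $=$ are decidable by a direct lookup in the stored history, the Boolean clauses are trivial, and — crucially — the existential clause ranges only over $\iota\in\tau$, i.e. is a quantifier \emph{bounded} by the current stage. Since an $OTM$ can search through all ordinals below a given one, each such bounded quantifier is $OTM$-computable, and as $\phi$ is a fixed finite formula the whole evaluation terminates. Thus at a successor step the machine computes, for each $\alpha<\tau$, the unique $s$ with $[\phi(\tau,\alpha,s)]_\tau^F=1$ (existence and uniqueness being guaranteed by the definition of an $IAM$-program), appends the new $\tau$-state, and so forms $\mathbb{I}_\phi^{\tau+1}$.

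The main obstacle I anticipate is the correct treatment of limit stages, where the $IAM$ forms $\mathbb{I}_\phi^\lambda=\bigcup_{\iota<\lambda}\mathbb{I}_\phi^\iota$ and I must ensure the tape encodes exactly this union. The point in my favour is that the history is genuinely monotone: once the pair $(\iota,F(\iota))$ is determined it is never revised, even though the individual states $F(\iota)$ may differ from one another. I would therefore arrange the pairing code so that the bit recording ``$F(\iota)(\beta)=x$'' is stabilized from stage $\iota+1$ onwards and never overwritten; then the liminf limit rule of the $OTM$ reproduces the accumulated history verbatim, and a fixed bounded computation at each limit recomputes the derived data needed to continue, for instance whether $\lambda$ is a limit via $\phi_{lim}$ from Lemma \ref{element}(a) and any $\liminf$ values via Lemma \ref{element}(b). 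Once the simulation faithfully tracks $\mathbb{I}_\phi^\tau$, the halting condition $\mathbb{S}_{\phi,\tau,F}(0)=\mathbb{H}$ and the output cells are read off directly, yielding $f(\alpha)$; since only the finite program $\phi$ was used, the resulting $OTM$ is parameter-free. Together with the first direction this gives the equivalence.
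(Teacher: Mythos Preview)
Your proposal is correct and follows essentially the same approach as the paper's (sketched) proof: both directions appeal to the earlier simulation of an $OTM$ by an $IAM$ for the easy implication, and for the converse both store the full history of $IAM$-states and exploit the boundedness of the quantifiers in $\phi$ to evaluate $[\phi(\tau,\alpha,s)]_\tau^F$ by an $OTM$. The only differences are cosmetic---the paper uses several tapes and a separator symbol where you use an ordinal pairing---and you spell out the treatment of limit stages, which the paper leaves implicit in its reference to the techniques of \cite{OTM}.
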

\begin{proof}(Sketch)
We saw above that an $OTM$ can be simulated on an $IAM$.\\
For the other direction, we indicate how to simulate an $IAM$ by an $OTM$. Let a finite $A$ and an $IAM$-program $\phi$ be given.\footnote{Note that a variant 
of an $OTM$ working with finitely many symbols $\sigma_1,...,\sigma_n$ can be simulated by an $OTM$ using 
only $0$ and $1$ by representing $s_i$ as $\underbrace{0...0}_{n-i}\underbrace{1...1}_{i}$.} 
To see how to emulate one computation step, assume we have safed the sequence $\textbf{s}:=(s_{\iota}|\iota<\tau)$ 
of $IAM$-states up to $IAM$-computing time $\tau$ so far on an extra tape $T_1$, separated by an extra symbol. The techniques from \cite{OTM} for evaluating the bounded truth predicate can then be adapted to compute $s_{\tau}$ on a second tape, using a third
tape as a scratch tape. For this, we compute, for each $\alpha\leq\tau$, $[\phi(\tau,\alpha,s)]_{\tau}^{\textbf{s}}$ for each $s\in A$ until we find the unique $\bar{s}$ with $[\phi(\tau,\alpha,\bar{s})]_{\tau}^{\textbf{s}}=1$, so that $s_{\tau}(\alpha)=\bar{s}$. 
Finally, we copy $s_{\tau}$ to the end of $T_1$ to obtain a representation of $(s_{\iota}|\iota<\tau+1)$.
\end{proof}

This shows, up to our analysis in section $3$ and the restriction to working time and space $On$, that the intuitive concept of transfinite computability coincides with $OTM$-computability.
Hence, we can finally close this section by stating our candidate for an $ICTT$:

\begin{center}
 \textbf{Infinitary Church-Turing-Thesis}: A function $f:On\rightarrow On$ is computable by the idealized agent of set theory following a deterministic rule iff it is computable by an $OTM$.
\end{center}

\section{Conclusion and further Work}

We have argued that there is an intuitive notion of transfinite computability and that rendering it precisely leads us to a notion of transfinite computability equivalent with
$ORM$- and $OTM$-computability. Consequently, the constructible hierarchy was obtained as the realm of this idealized activity. 
This suggests that these models indeed capture some general intuitive concept and hence that results about these models can be interpreted
as results about this notion. Accordingly, one should expect interesting applications to general mathematics: For example, one might consider measuring the complexity of an object
or a function by the computational ressources necessary to compute it. This would give a precise meaning to the question whether certain objects granted to exist by indirect proofs can be 
`concretely constructed', even if this construction is allowed to be transfinite. In particular, it suggests connections of transfinite computability to reverse mathematics as exhibited in \cite{KoeWe}.

However, our argument has the drawback of being model-dependent: We develop a certain notion of computability from the informal idea of an idealized agent, hopefully along plausible lines. It would be preferable to
have a formal notion of transfinite computation not refering to a particular model; this could be obtained by an appropriate axiomatization of transfinite computations similar to approaches that have been made in the classical case.
(See e.g. \cite{DeGu}. See also \cite{KoeSy}.)

Another question is whether a similar approach will work for other models like e.g. $ITTM$s. This is likely to be more difficult, as our coarse approach of approximating the activity of an idealized agent is not available here:
As it is shown in \cite{FrWe}, there are natural alternative choices for the limit rules that lead to larger classes of computable functions.


\begin{thebibliography}{}
\bibitem{DeGu}[DeGu] N. Dershowitz, Y. Gurevich. A Natural Axiomatization of Computability and Proof of Church's Thesis. Bulletin of Symbolic Logic 14(3): 299-350 (2008)

\bibitem{Fi}[Fi] T. Fischbach. The Church-Turing-Thesis for Ordinal Computable Functions. Diploma Thesis. Bonn $2010$.

\bibitem{FrWe}[FrWe] S.-D. Friedman, P. Welch. Hypermachines. J. Symbolic Logic Volume 76, Issue 2 (2011), 620-636

\bibitem{Ho}[Ho] S. Hoffman. Kitcher, Ideal Agents and Fictionalism. Philosophia Mathematica (3) Vol. 12, pp. 3-17 (2004)
\bibitem{Hog}[Hog] M. L. Hogarth. Does General Relativity Allow an Observer to View an Eternity in a Finite Time? Foundations of Physics Letters, Vol. 5, No 2, $1992$
\bibitem{ITTM}[ITTM] J.D. Hamkins and A. Lewis. Infinite Time Turing Machines. J. Symbolic Logic, 65(2), 567-604 (2000)
\bibitem{Je}[Jech] T. Jech. Set Theory. 3rd Millenium edition, revisited and expanded. Springer (2002)
\bibitem{Ki}[Ki] P. Kitcher. The Nature of Mathematical Knowledge. Oxford University Press (1983)
\bibitem{Koe}[Koe] P. Koepke. Ordinal computability. In Mathematical Theory and Computational Practice. K. Ambos-Spies et al, eds., Lecture Notes in Computer Science 5635 (2009), 280-289.
\bibitem{KoeSy}[KoeSy] P. Koepke, R. Siders. Minimality considerations for ordinal computers modeling constructibility. Theoretical Computer Science 394 (2008), 197-207
\bibitem{KoeWe}[KoeWe] P. Koepke, P. Welch. A Generalized Dynamical System, Infinite Time Register Machines, and $\Pi_{1}^{1}-CA_{0}$. In CiE 2011. B. L\"owe et al. (eds.), LNCS
6735, 152-159 (2011)
\bibitem{Ma}[Ma] Y. Matiyasevich. Hilbert's $10$th problem. MIT Press, Cambridge, Massachusetts ($1993$)
\bibitem{NeGe}[NeGe] P. Nemeti, G. Szekely. Existence of Faster than Light Signals Implies Hypercomputation already in Special Relativity. arXiv:1204.1773v1
\bibitem{ORM}[ORM] P. Koepke, R. Siders. Register computations on ordinals. Archive for Mathematical Logic $47$ ($2008$), $529-548$.
\bibitem{OTM}[OTM] P. Koepke. Turing computations on ordinals. Bulletin of Symbolic Logic $11$ ($2005$), $377-397$
\bibitem{Sey}[Sey] B. Seyfferth. Three models of ordinal computability. PhD thesis. Bonn $2012$
\bibitem{Tu}[Tu] A. Turing. Intelligent Machinery. National Physical Laboratory Report. In Meltzer, B., Michie, D. (eds) 1969. Machine Intelligence 5. Edinburgh: Edinburgh University Press. 
\bibitem{Tu1}[Tu1] A. Turing. On Computable Numbers, with an Application to the Entscheidungsproblem. Proceedings of the London Mathematical Society. 42, S. 230-65 ($1937$)
\bibitem{Wa}[Wa] H. Wang. From Mathematics to Philosophy. Routledge and Kegan Paul Ltd (1974)
\end{thebibliography}
\end{document}